\documentclass[a4paper,twoside,english]{amsart}
\usepackage[T1]{fontenc}
\usepackage[latin9]{inputenc}
%\synctex=-1
\usepackage{babel}
\usepackage{verbatim}
\usepackage{amsthm}
\usepackage{amsmath}
\usepackage{amssymb}
\usepackage[unicode=true,
 bookmarks=true,bookmarksnumbered=true,bookmarksopen=false,
 breaklinks=false,pdfborder={0 0 1},backref=false,colorlinks=false]
 {hyperref}
\hypersetup{
 pdfauthor={V. Mantova and U. Zannier}}

\makeatletter

%%%%%%%%%%%%%%%%%%%%%%%%%%%%%% LyX specific LaTeX commands.
\pdfpageheight\paperheight 
\pdfpagewidth\paperwidth

%%%%%%%%%%%%%%%%%%%%%%%%%%%%%% Textclass specific LaTeX commands.
\theoremstyle{plain}
\newtheorem{thm}{\protect\theoremname}[section]
 \newcommand\thmsname{\protect\theoremname}
 \newcommand\nm@thmtype{theorem}
 \theoremstyle{plain}

  \theoremstyle{remark}
  \newtheorem{rem}[thm]{\protect\remarkname}
  \theoremstyle{definition}
  \newtheorem*{example*}{\protect\examplename}
  \theoremstyle{definition}
  
  \theoremstyle{plain}
  \newtheorem{lem}[thm]{\protect\lemmaname}
  \theoremstyle{plain}
  \newtheorem{prop}[thm]{\protect\propositionname}
  \theoremstyle{plain}
  
  \theoremstyle{plain}
  \newtheorem{question}[thm]{\protect\questionname}

 %%%%%%%%%%%%%%%%%%%%%%%%%%%%%% User specified LaTeX commands.
\makeatletter
  \theoremstyle{definition}
  \newtheorem{my@rem}[thm]{Remark}
  
\makeatother

\makeatother

  \providecommand{\examplename}{Example}
  \providecommand{\lemmaname}{Lemma}
  \providecommand{\propositionname}{Proposition}
  \providecommand{\remarkname}{Remark}
  \providecommand{\theoremname}{Theorem}
\providecommand{\theoremname}{Theorem}
 \providecommand{\corollaryname}{Corollary}
 \providecommand{\questionname}{Question}
%\usepackage{calc}
%\setlength\paperwidth{\textwidth+1cm}
%\setlength\paperheight{\textheight+\headheight+\headsep+\footskip+1cm}
%\setlength\oddsidemargin{-1in+.5cm}
%\setlength\topmargin{-1in+.5cm}

%%% VM_4 -- Margini.
\usepackage[left=3.1cm,right=3.5cm,top=3cm,bottom=3cm]{geometry}

\usepackage{mathtools}

% BOBO \newtheorem{thm}{Theorem}[section]
%\newtheorem{lemma}[thm][{Lemma} 

%\mathchardef\nmid="3\msbfam2D
%\def\Bbb#1{{\fam\msbfam\relax#1}}

\def\Z{{\Bbb Z}}
\def\G{{\Bbb G}}

\def\P{{\Bbb P}}
\def\C{{\Bbb C}}

\def\B{{\bf B}}

\def\CVD{{\hfill\hfil{\lower 2pt\hbox{\vrule\vbox to 7pt
{\hrule width  5pt\varphifill\hrule}\varphirule}}}\par}

%\magnification=1200

\begin{document} 

\title{On the $p$-adic distribution of torsion values for a section of an abelian scheme}
\author{Brian Lawrence and Umberto Zannier}
\maketitle

\begin{abstract}
Let $A \rightarrow S$ be an abelian scheme over a $p$-adic field, and let $s \colon S \rightarrow A$ be a section.
We study the torsion locus $\bigcup \limits_{n \geq 1} s^{-1}(A[n])$ on $S$,
and we show that torsion points on $S$ of different orders stay away from each other.
\end{abstract}

\section{Introduction}

Let $A \rightarrow S$ be an abelian scheme of relative dimension $g$, and let $s \colon S \rightarrow A$ be a section.  For each positive integer $n$, we may consider $s^{-1}(A[n])$, the maximal closed subscheme of $S$ on which $n s$ agrees with the identity section.  Write 
\[ s^{-1} A_{\mathrm{tors}} = \bigcup \limits_{n \geq 1} s^{-1}(A[n]); \]
we call this set the \emph{torsion locus} of $A \rightarrow S$ relative to the section $s$.

\medskip

Heuristically, one expects that, if $\dim S < g$, then under suitable genericity assumptions the torsion locus is contained in a single $s^{-1}(A[N])$; in the general case,  this is a difficult question in the subject of unlikely intersections. (See for instance  the papers  \cite{CMZ} and \cite{MZ}, of the second author resp.\ with Corvaja and Masser, and  Masser, for finiteness conclusions when $S$ is a curve.)

 If $\dim S \geq g$, one expects, in the absence of special circumstances,  that the torsion locus will be Zariski dense in $S$; and in fact if $S$ is a scheme over $\mathbf{R}$ or $\mathbf{C}$, that it will be dense for the classical topology on $S$ as well.  This issue appears  natural to investigate, and also has shown to admit applications, as we shall briefly recall.
 
 \medskip

Complex density of the torsion locus has been proven in a number of cases. For instance, the Jacobian family over the universal hyperelliptic family of a given genus leads naturally  to a certain section related to the Pell-Abel equation in polynomials, and in this realm complex density of torsion values is known since long ago. (See the second author's survey  paper \cite{Z} for more.) In the same context, but looking at {\it real} density, the problem is subtler; see the recent {\it S\'eminaire Bourbaki} by Serre \cite{S} for a discussion of some applications. The density here was proved by    Bogatyrev \cite{Boga} and, independently,  by the first  author \cite{Law}. 
In  the  general case, Andr\'e, Corvaja and the second author, partly with Gao \cite{ACZG}, studied the torsion locus by means of the ``Betti map'' associated to the section; this is a real-analytic map from the universal cover of $S$ to $\mathbf{R}^{2g} / \Z^{2g}$ coming from the complex-analytic uniformization of $A$ (and it appears in \cite{Boga} and \cite{Law} as well). In particular, in \cite{ACZG}  it is proved that in many cases the Betti map is submersive, which entails density of torsion values in $S(\C)$.  \footnote{The above mentioned paper  \cite{CMZ} also  made use of the Betti map in the unlikely intersection setting as well.}  We also point out Voisin's recent paper \cite{V}, where this density is applied toward problems on Chow groups.

\medskip

The purpose of this note is to show that, when $S$ is a variety over a $p$-adic field, the $p$-adic analytic structure of the torsion locus is much simpler.  Torsion points of different orders stay away from each other, in a sense which will be made precise.\footnote{In the rather special case of elliptic schemes over a curve, a kind of Galois equidistribution of torsion values is proved in \cite{DM-M} both in the complex and $p$-adic case. While the former result implies the complex density, this second conclusion seems not directly related to the present results.}  Our result is similar in form to a theorem of Maulik and Poonen on the jumping locus of the N\'eron--Severi rank \cite{MP}.  

Scanlon proved a related result for subvarieties of a fixed abelian variety: given a subvariety $X$ of an abelian variety, the $p$-adic distances from $X$ to those torsion points not lying on $X$ are bounded below.
We wonder whether a similar result is true for families of abelian varieties.

\begin{question}
\label{scanlon-question}
Let $K$ be a $p$-adic field.  Let $S$ be a quasi-projective variety over $K$, and let $A \rightarrow S$ be an abelian scheme over $S$.
Choose an integral structure on $A$, which gives rise to a notion of $p$-adic distance between points of $A$.
Suppose $X$ is a subscheme of $A$ (resp.\ a subscheme of the form $s(S)$, for $s \colon S \rightarrow A$ a section),
and $Z$ a quasicompact rigid subspace \footnote{A rigid space is \emph{quasicompact} if it is covered by finitely many affinoids; see \cite{BGR} for more on rigid spaces.} of $S$.
Is there a constant $\epsilon$ such that, for any 
torsion section $s \colon Z \rightarrow A$ whose image is disjoint from $X$,
the minimum distance from $s(Z)$ to $X$ is at least $\epsilon$?
\end{question}

\subsection{Rigid analytic spaces.}
To formulate our results, we work in the context of Tate's rigid analytic spaces; for details we refer the reader to \cite{BGR}.  It would be possible, though probably less natural, to avoid this language, and the reader will easily see how to convey the present treatment into a self-contained one.
In any case, we recall here some examples and basic properties of rigid analytic spaces; 

\medskip

Let $K$ be a $p$-adic field, i.e.\ a finite extension of $\mathbf{Q}_p$.

There is an analytification functor allowing one to regard any finite-type $K$-scheme as a rigid analytic space \cite[9.3.4]{BGR}.

The unit ball $\B_n=\B_n(K)=(x_1,\ldots ,x_n)\in K^n: |x_i| \le 1\}$  is a rigid analytic space.
The ring of rigid-analytic functions on $\B_n$ is the \emph{Tate algebra}, the ring of power series in $n$ variables
\[\mathbf{T}_n = K \langle x_1, \ldots, x_n \rangle =  \left \{ \sum a_I x^I, a_I \in K, \lim_{I \rightarrow \infty} a_I = 0 \right \}, \]
where the sum is taken over multi-indices $I$.
This is the same as the ring of power series converging on the unit ball in $ \mathcal{O}_{\mathbf{C}_p}^n$;
see \cite[5.1]{BGR}.

Similarly, for any $r$ in the value group of $\mathbf{C}_p$, one has the notion of rigid-analytic ball of radius $r$.

Given analytic functions $f_1, \ldots, f_k$ on $\B_n$,
their common vanishing locus is again a rigid analytic space;
any function on this space can be expressed (nonuniquely) as an element of
the Tate algebra of functions on $\B_n$.
An analytic space arising in this way is called \emph{affinoid};
affinoids play the same role in the rigid-analytic theory as
affine schemes (spectra of rings) in scheme theory.
See \cite[6.1]{BGR} for more on affinoid rings, \cite[7.1]{BGR} for affinoid varieties,
and \cite[9.1-9.1]{BGR} where affinoids are glued together into general analytic varieties.

A rigid space comes equipped with a \emph{Grothendieck topology}: 
we can speak of \emph{open sets} of such a space, 
and \emph{finite} unions and intersections of open sets are open \cite[9.1]{BGR}.
The restriction to finite unions is a technical device,
meant to overcome the unpleasant fact that a $p$-adic field
with the usual topology is totally disconnected.
It will not concern us here, 
and it will cause the reader no harm to imagine that our rigid spaces come equipped with the \emph{topology} in the usual sense,
a basis for which we now describe.

If $X$ is a rigid space and $f$ an analytic function on $X$, then the subset
\[ \{ x \in X | \left | f(x) \right | \leq 1 \} \]
is an open subspace of $X$; and (if $X$ is affinoid) such opens form  a basis for the (Grothendieck or usual) topology on $X$ \cite[7.2.3/2]{BGR}.
A general rigid variety admits a covering by affinoids.

As a particular example, suppose $X$ is the analytification of a finite-type affine scheme over $K$, and choose coordinates $(x_1, \ldots, x_n)$ on $X$.
Suppose further that $(0, \ldots, 0)$ is a point of $X$.  Then for any $r$, the set 
\[p^r \mathbb{B}_n \cap X = \left \{ (x_1, \ldots, x_n) \in X \text{ such that } |x_i| < |p^r|  \right \}\]
is open in $X$; and conversely any open neighborhood of $(0, \ldots, 0)$ in $X$ contains $p^r \mathbb{B}_n \cap X$ for sufficiently large $r$.

With these preliminaries, we are ready to state our main theorem.

\subsection{The main theorem.}
\begin{thm}
\label{discrete}
Let $K$ be a $p$-adic field.  Let $S$ be a quasi-projective variety over $K$, and let $A \rightarrow S$ be an abelian scheme over $S$.  Choose a section $s: S \rightarrow A$.  For each $n \in \Z_{>0}$, let $S_n \subseteq S$ be the subscheme on which $s$ is torsion of exact order $n$:
\[  S_n = s^{-1}(A[n]) - \bigcup\limits_{d | n,\: d < n} s^{-1}(A[d]).\]
Then for any finite extension $L/K$:
\begin{enumerate}
\item Any $x_0 \in S_n(L)$ has a rigid-analytic neighborhood $U$ that is disjoint from $S_{n'}$ for $n' \neq n$.
\item Suppose $x_0 \in S(L)$ is not in any $S_n$.  Then there is some rigid analytic neighborhood $U$ of $x_0$ which is also disjoint from all of the $S_n$.
\end{enumerate}
\end{thm}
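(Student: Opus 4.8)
The plan is to deduce both parts from the $p$-adic logarithm of the formal group of $A \to S$ along the zero section $0_S$. First I would shrink $S$ to a small affinoid neighbourhood $V$ of $x_0$ on which $\mathrm{Lie}(A/S)$ is free and $A|_V$ admits, near $0_S(V)$, an integral formal chart, so that completing $A$ along $0_S$ produces a formal group law over $\mathcal{O}(V)$ with power-bounded coefficients in coordinates $x_1,\dots,x_g$. For an integer $r$ large enough in terms of $p$ alone (no dependence on $x_0$ or on the individual fibres, since the convergence radii for $\log$ and $\exp$ on a formal group are absolute), the ``congruence subgroup'' $G=\{\,|x_i|\le |p|^{r}\,\}\subseteq A^{\mathrm{an}}|_V$ is a rigid open subgroup, and $\log$ and $\exp$ restrict to mutually inverse rigid-analytic isomorphisms between $G$ and a closed polydisc sub-bundle $\mathfrak{g}\subseteq \mathrm{Lie}(A/S)|_V$. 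Two consequences will be used throughout: every fibre $G_x$ is torsion-free (it is isomorphic to the torsion-free group $\mathfrak{g}_x$), and $\log$ is injective on each $G_x$; moreover $G_{x_0}$ is an open subgroup of $A_{x_0}(L)$, which is compact since $A_{x_0}$ is proper, so $[A_{x_0}(L):G_{x_0}]<\infty$.

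For part (1), suppose $x_0\in S_n(L)$, so that $[n]s(x_0)=0_S(x_0)\in G_{x_0}$. Since $G$ is an admissible open and $[n]s\colon S\to A$ is a morphism, $U':=([n]s)^{-1}(G)$ is a rigid-analytic neighbourhood of $x_0$. If some $x\in U'$ lay in $S_{n'}$ with $n'\nmid n$, then $[n]s(x)$ would be a nonzero torsion point, of exact order $n'/\gcd(n,n')>1$, lying in the torsion-free group $G_x$ --- impossible; hence $U'$ meets no $S_{n'}$ with $n'\nmid n$. To exclude the proper divisors of $n$, I would intersect with the Zariski-open set $U_1:=S\smallsetminus\bigcup_{d\mid n,\,d<n}s^{-1}(A[d])$, which contains $x_0$ by the definition of $S_n$ and is disjoint from every $S_{n'}$ with $n'\mid n$, $n'<n$, because $S_{n'}\subseteq s^{-1}(A[n'])$. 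Then $U:=U'\cap U_1$ is the required neighbourhood.

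For part (2), write $P:=s(x_0)$, a non-torsion point. By finiteness of $[A_{x_0}(L):G_{x_0}]$ there is an integer $m$ with $[m]P\in G_{x_0}$, so $U:=([m]s)^{-1}(G)$ is a neighbourhood of $x_0$; set $\ell:=\log\circ[m]s\colon U\to\mathrm{Lie}(A/S)|_V$. Since $P$ is non-torsion, $[m]P\neq 0$, and as $\log$ is injective on $G_{x_0}$ we get $\ell(x_0)=\log([m]P)\neq 0$; shrinking $U$, I may assume $\ell$ is nowhere vanishing on $U$. But if $x\in U$ lay in some $S_n$, then $[m]s(x)$ would be a torsion point of $G_x$, hence $0$, forcing $\ell(x)=0$ --- a contradiction. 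Therefore $U$ meets no $S_n$.

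The main obstacle is the relative input in the first paragraph: producing the subgroup $G$ together with the convergent logarithm and exponential \emph{uniformly in the rigid-analytic family over $V$}, with torsion-freeness of the fibres $G_x$ holding for all rigid points of $V$, including those whose residue fields are of large degree and ramification over $K$. Over a fixed $p$-adic field this is classical formal-group theory --- the denominator estimates for $\log$ and $\exp$, and the fact that a sufficiently deep congruence subgroup of a $p$-adic formal group is torsion-free --- and the relevant radius depends only on $p$, not on the field; the work lies in spreading the construction out over the affinoid base, e.g.\ by choosing an integral formal model of $A|_V$ near $0_S$ so that the formal group law has power-bounded coefficients in $\mathcal{O}(V)$, after which the fibrewise statements follow by specialisation.
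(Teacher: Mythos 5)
Your proposal is correct and follows essentially the same approach as the paper. Your ``congruence subgroup'' $G$ with mutually inverse $\log/\exp$ maps is exactly the paper's Lemma \ref{local_unif}, in which the rigid-analytic open subgroup $E$ is produced by integrating translation-invariant differentials to obtain the $p$-adic logarithm coordinates $u_i$ (the same construction, phrased via invariant differentials rather than formal-group language); and your deduction of (1) and (2) from torsion-freeness of the fibres $G_x$, finiteness of $[A_{x_0}(L):G_{x_0}]$, and discarding the Zariski-closed loci $s^{-1}(A[d])$ is the same as the paper's deduction, merely split into the two cases from the outset rather than treated uniformly.
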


One could ask whether the set of torsion values not only is discrete but has no accumulation points. This is not generally true and we shall give an example in the last section.

\subsection{Acknowledgements}

We would like to thank Yves Andr\'e, Pietro Corvaja, and Shizhang Li for helpful discussions, and the anonymous referee for comments and corrections.  Question \ref{scanlon-question} was suggested to us by the referee.

\section{Proof of Theorem \ref{discrete} }

The proof of Theorem \ref{discrete} uses the following ``structure theorem'' for abelian schemes over a $p$-adic field.

\begin{lem} \label{local_unif}
Let $K$ be a $p$-adic field, $S$ a quasi-projective variety over $K$, and $\pi: A \rightarrow S$ an abelian scheme of dimension $g$ over $S$ with identity section $e: S \rightarrow A$.  Then for every $x_0 \in S(K)$ there exist a rigid-analytic neighborhood $U$ of $x_0$ in $S$, a rigid-analytic open set $E \subset \pi^{-1}(U)$ containing the image of the identity section $e: U \rightarrow A$, and a rigid-analytic map $E \rightarrow U \times \B_g$ to the $p$-adic unit ball over $U$, with the following properties.
\begin{enumerate}
\item \label{isom} The map $E \rightarrow U \times \B_g$ is a rigid-analytic isomorphism.
\item \label{sbgrp} Fiber by fiber, $E$ is a subgroup of $A$: for every finite extension $L$ of $K$, and any $x \in S(L)$, the set $(E \cap A_x)(L)$ is a subgroup of $A_x(L)$.
\item \label{disk} The projection map $E \rightarrow U \times \B_g \rightarrow \B_g$ is a group homomorphism fiber-by-fiber: for any finite extension $L$ of $K$, and any $x \in S(L)$, the map
\( (E \cap A_x) (L) \rightarrow \B_g(L) \)
 is a group homomorphism.  (Here we equip $\B_g(L) = \mathcal{O}_L^{g}$ with the structure of abelian group under addition.)
\item \label{quotient} The subgroup $(E \cap A_{x_0})(K) \subset A_{x_0}(K)$ is of finite index.
\end{enumerate}
\end{lem}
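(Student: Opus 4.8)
The plan is to prove Lemma \ref{local_unif} by reducing to the classical $p$-adic uniformization of a single abelian variety and then spreading it out over a neighborhood of $x_0$. Recall that for an abelian variety $B$ of dimension $g$ over a finite extension $L$ of $\Q_p$ with good reduction, the formal group $\hat B$ gives a subgroup of $B(L)$ of finite index, and the formal logarithm provides an isomorphism of rigid-analytic groups from an open ``disk'' around the origin onto $\B_g$ with its additive group structure; for bad reduction one works with the N\'eron model and the formal completion of the connected component of its special fiber, which is still a formal group of dimension $g$, again yielding a finite-index subgroup and a logarithm to $\B_g$. This takes care of the fiber $A_{x_0}$, and the real task is to carry out the same construction in families.

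\medskip

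\textbf{Step 1: Choose a model over the valuation ring.} Shrinking $S$ to an affine neighborhood of $x_0$ and using quasi-projectivity, I would spread $A \to S$ out to a smooth projective (or at least a N\'eron-type) model $\mathcal A \to \mathcal S$ over a neighborhood of $x_0$ in a scheme over $\O_K$ (after possibly enlarging $K$ so that $x_0$ is an integral point; one may also pass to the finite extension $L$ and argue there, descending the topological statement at the end). The key point is that the identity section $e$ lands in the smooth locus, so one may form the formal completion $\hat{\mathcal A}$ of $\mathcal A$ along $e(\mathcal S)$; this is a formal group scheme of relative dimension $g$ over (the formal completion of) $\mathcal S$.

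\medskip

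\textbf{Step 2: Trivialize the formal group and take logarithms.} By the formal function theorem / standard deformation theory, after further shrinking $\mathcal S$ the formal group $\hat{\mathcal A}$ admits a coordinate system: it is isomorphic as a formal scheme over $\mathcal S$ to $\widehat{\A^g}_{\mathcal S}$, with the group law given by power series in $2g$ variables with coefficients that are regular functions on $\mathcal S$. Rigid-analytifying, one obtains the candidate open set $E \subset \pi^{-1}(U)$, namely the locus where the $g$ formal coordinates have absolute value $\le 1$ (more precisely, $< 1$, or $\le |p^r|$ for suitable $r$ chosen uniformly over the quasicompact $U$ so that the group-law and logarithm series converge); this gives the rigid-analytic isomorphism $E \cong U \times \B_g$ of (\ref{isom}) and makes (\ref{sbgrp}) immediate, since $E$ is by construction a formal/rigid subgroup fiber by fiber. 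For (\ref{disk}), I would write down the formal logarithm of $\hat{\mathcal A}$: since we are in characteristic zero, the logarithm of a commutative formal group is an isomorphism onto the additive formal group, defined by power series with bounded denominators, hence converging on a slightly smaller polydisk; choosing $r$ large enough (uniformly over $U$, using that $U$ is quasicompact so the denominators are bounded) makes the logarithm converge on all of $E$ and gives the fiberwise homomorphism $E \cap A_x \to \B_g$. Finally (\ref{quotient}) is the classical statement recalled above: the formal group is the kernel of reduction on the N\'eron model, so $(E \cap A_{x_0})(K)$ has finite index in $A_{x_0}(K)$ because the component group and the group of $\F_q$-points of the special fiber are finite.

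\medskip

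\textbf{Main obstacle.} The delicate part is the \emph{uniformity in the family}: ensuring that a single radius $r$ works simultaneously for (a) convergence of the formal group law, (b) convergence and injectivity of the formal logarithm, and (c) the identification of $E$ with a genuine rigid-analytic subgroup, all over a fixed neighborhood $U$ of $x_0$ rather than just on the fiber. This is handled by quasicompactness: after shrinking $U$ to an affinoid, the coefficients of all the relevant power series lie in $\O_K\langle\text{coords of }U\rangle$ up to a bounded denominator, so their sup-norms over $U$ are bounded and a uniform $r$ exists. A secondary technical point is the bad-reduction case, where one must invoke the N\'eron model and check that its formal completion along the identity still spreads out nicely; but since we are free to shrink $S$ around the single point $x_0$, we only ever need the N\'eron model in a neighborhood of one fiber, where its formal group is well-behaved and $g$-dimensional. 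The passage from $L$ back to $K$ in the statement is harmless: all four assertions are about $L'$-points for varying finite $L'/K$, so proving them after base change to $L$ and for all finite extensions of $L$ suffices.
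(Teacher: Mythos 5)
Your proposal is essentially correct but takes a genuinely different and heavier route than the paper. The paper works entirely on the rigid-analytic side over $K$, never choosing an integral model: after passing to a Zariski/rigid neighborhood of $x_0$, it picks translation-invariant differentials $\omega_1,\dots,\omega_g$ forming a basis of $\Omega_{A/S}$, chooses local parameters $t_1,\dots,t_g$ at the identity giving a rigid chart near $e(U)$, and then \emph{integrates} the $\omega_i$ termwise (as power series in the $t_j$) to produce functions $u_1,\dots,u_g$ on a smaller polydisk. Translation-invariance of the $\omega_i$ forces translation by a nearby point to act on the $u_i$ by adding constants, and inversion to act by $u_i \mapsto -u_i$; that simultaneously proves (\ref{sbgrp}) and (\ref{disk}) with no mention of formal groups. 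Property (\ref{quotient}) is proved by the bare compactness argument: $A_{x_0}(K)$ is compact, cosets of $(E\cap A_{x_0})(K)$ are open, so the index is finite. This is noticeably lighter than invoking N\'eron models, component groups, and finiteness of $\mathbf{F}_q$-points.

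Two remarks on your version. First, the appeal to ``N\'eron models'' for the \emph{family} is imprecise: N\'eron models live over DVRs, not over higher-dimensional $\mathcal O_K$-schemes $\mathcal S$, so the phrase ``N\'eron-type model over a neighborhood of $x_0$'' does not literally have a referent. What you actually need (and what makes your argument go through) is only a \emph{smooth} group-scheme model of $A$ near $e(\mathcal S)$, not a N\'eron one; such a model exists after shrinking, since $A\to S$ is smooth and one spreads out smoothness. Once one has any smooth model, formal completion along $e$ and the formal logarithm give (\ref{isom})--(\ref{disk}) exactly as you describe, with uniformity over a quasicompact $U$ handled as you say. Second, for (\ref{quotient}) you genuinely do reduce to a single fiber $A_{x_0}$, where the classical N\'eron model over $\mathcal O_K$ is legitimate and your argument works; but the paper's compactness argument is shorter and avoids even that. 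So the route you outline is viable with the above correction, but the paper's invariant-differential integration is the more economical path.
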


\begin{proof}[Proof of Theorem \ref{discrete}, assuming Lemma \ref{local_unif}.]
Enlarging $K$ if necessary, we may assume that $x_0 \in S(K)$.

Given $x_0 \in S(K)$, take $U_0$ and $E$ as in Lemma \ref{local_unif}.  
Let $n$ be the order of the image of $s(x_0)$ in the finite group $A_{x_0}(K)/((E \cap A_{x_0})(K))$.  Then we have that $n s(x_0) \in E$, so by shrinking $U_0$ if necessary we can assume that $U_0 \subset (ns)^{-1}(E)$.

For any finite extension $L/K$, the group $E(L) \cong \mathcal{O}_L^g$ is torsion-free. Therefore, $s$ is torsion over a point $x \in U_0(L)$ if and only if $ns(x) = 0$; and in this case $s(x)$ is torsion of order dividing $n$.  For $d < n$ dividing $n$, the torsion locus $s^{-1}(A[d])$ is Zariski-closed in $S$ and does not contain $x_0$; so we may assume that $U_0$ is disjoint from $s^{-1}(A[d])$ for such $d$.  Therefore, if $x \in U_0(L)$ and $ns(x) = 0$, then $s(x)$ is torsion of \emph{exact order} $n$.

Now we prove the theorem.  If $t \in S_m(K)$ for some $m$, then we must have $m = n$, and we can take $U = U_0$: no point of $U$ can be torsion of exact order other than $n$.  If $t$ is not torsion, then we can take $U = U_0 - S_n$, so no point of $U$ belongs to the torsion locus.
\end{proof}

\begin{proof}[Proof of Lemma \ref{local_unif}.]

Properties (\ref{isom}) and (\ref{disk}) are a consequence of smoothness, and hold for all group schemes.  Property (\ref{quotient}) is a consequence of compactness.

We prove (\ref{isom}), (\ref{sbgrp}), and (\ref{disk}) first.  Since $A$ is a commutative group scheme over a base of characteristic zero, it is smooth over $S$, and the sheaf of relative differentials $\Omega_{A/S}$ is the pullback of a locally free sheaf of rank $g$ on $S$.  In concrete terms, after restricting to a Zariski neighborhood of $x_0$ in $S$, there are global differentials $\omega_1, \omega_2, \ldots, \omega_g$ on $A$ that form a basis for $\Omega_{A/S}$ at every point of $S$.  These differentials are translation-invariant.

After passing to a rigid open subset of $S$, we can choose local parameters $t_1, \ldots, t_g$ on $A$ at the identity.
In other words, we take $U \subset S$ open, and we require that $t_1, \ldots, t_g$ be meromorphic functions on $A$,
all vanishing at the identity, whose differentials span the cotangent space to $A$ at the identity.
It follows that $t_1, \ldots, t_g$ define an analytic isomorphism $\tau$ from a rigid open subset $E'$ of $A$ to $U \times \mathbf{B}'$,
for $\mathbf{B}'$ a rigid-analytic ball, not necessarily of radius 1.

Since each $\omega_i$ is a global differential, we can write
$$\omega_i = \sum f_{ij}(t_1, \ldots, t_g) d t_j,$$
where each $f$ is a power series in the variables $t_i$ converging on $\mathbf{B}'$.  
Changing coordinates if necessary, we may assume that
$f_{ij}$ takes the value $\delta_{ij}$ (Kronecker delta) at the identity.
Since $d \omega_i = 0$, we can integrate each $\omega_i$ as a formal power series to find a function
$$u_i = u_i(t_1, \ldots, t_g)$$
such that $\omega_i = du_i$, converging on a smaller ball $\mathbf{B} \subseteq \mathbf{B}'$.
By our choice of coordinates, we have $u_i = t_i + \text{(higher order terms)}$.

By the inverse function theorem,
we may assume (perhaps restricting to a still smaller ball) that the functions $u_1, \ldots, u_g$
give an analytic isomorphism $\upsilon$ between $S \times \mathbf{B}$ and itself.
We will take $E = \tau^{-1}(U \times \mathbf{B})$ and then, after rescaling the ball $\mathbf{B}$, the composition $\upsilon \circ \tau \colon E \rightarrow U \times \B_g \subseteq S \times \B_g$ will be our desired map.  Now (\ref{isom}) follows because $\tau$ and $\upsilon$ are both isomorphisms.

For any finite $L / K$ and any point $g \in E(L)$, translation by $g$ leaves the differentials $\omega_i$ invariant; so this translation acts on $u_i$ by addition of a constant.  Similarly, we see that inversion takes $u_i$ to $-u_i$.  
It follows that the group law on $A$ fixes $E$ (i.e.\ $E$ is stable under addition and inversion), and in terms of the coordinates $u_i$ on $\mathbf{B}$, the group law is given by addition.  This proves (\ref{sbgrp}) and (\ref{disk}).

We prove (\ref{quotient}) by a standard compactness argument.  The fiber $A_{x_0}$ is an abelian variety over $K$.
Topologize the set $A_{x_0}(K)$ by the standard $p$-adic topology.  Then $A_{x_0}(K)$ is compact, and cosets
of $(E \cap A_{x_0})(K)$ are open.  Therefore, $(E \cap A_{x_0})(K)$ is of finite index in $A_{x_0}(K)$.
\end{proof}

\section{A counterexample}

If $A$ has bad reduction at a point of $S$, 
Theorem \ref{discrete} no longer holds: torsion points can accumulate above a point of bad reduction.  We consider the following example over $\mathbf{Q}_p$, with $p \neq 2, 3$.  Let $S$ be the disk $p^2 \B_1$, with coordinate $t$; its points over $\mathbf{Q}_p$ are given by $S(\mathbf{Q}_p) = p^2 \Z_p$. Consider the elliptic curve defined over $S$ by the equation
$$E_t: y^2 = (x - 1/12)^2 (x + 1/6) + t \left ( x - \frac{p}{(1-p)^2} - 1/12  \right ).$$
This defines an elliptic scheme $E$.
The fiber over $t=0$ is the nodal cubic
$$E_0: y^2 = (x - 1/12)^2 (x + 1/6).$$
The $j$-invariant of $E_t$ is a rational function of $t$.  After a computation we find that $1/j$ is given by a power series in $\Z_p[[t]]$ such that
$$1 / j(E_t) = - \frac{p}{(1-p)^2} t + O(t^2).$$
In particular, this power series converges for all $t \in S$, and $E_t$ is smooth for all nonzero $t \in S$.

The point
$$(x, y) = \left (\frac{p}{(1-p)^2} + 1/12 , \frac{p(1+p)}{2(1-p)^3} \right )$$
is a point of $E_t$ for every $t$, so it defines a section $s: S \rightarrow E$.  We will prove the following.

\begin{prop}
There exists a sequence $(t_n)_{n>n_0}$ with $t_n \in S$ tending to zero, such that $s_{t_n}$ is torsion in $E_{t_n}$, of exact order $n$.
\end{prop}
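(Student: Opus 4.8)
The plan is to exploit the fact that $E_0$ is a nodal cubic, so the smooth locus $E_0^{\mathrm{sm}}(\mathbf{Q}_p)$ is a multiplicative group, isomorphic to $\mathbf{G}_m(\mathbf{Q}_p) = \mathbf{Q}_p^\times$ via a Tate-type parametrization, and the section $s$ specializes at $t=0$ to a point which — by the specific choice of coordinates — corresponds to a unit of infinite multiplicative order (or at least an element of suitably large order) in $\mathbf{Q}_p^\times$. The key point behind the numerology in the example is that over the punctured disk $S \setminus \{0\}$ the curve $E_t$ has multiplicative reduction with Tate parameter $q = q(t)$ satisfying $q(t) \sim 1/j(E_t) \sim -\frac{p}{(1-p)^2} t$, so that $\mathrm{ord}_p(q(t)) \to \infty$ as $t \to 0$ along $p$-adic points; this is exactly what makes torsion accumulate.

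First I would set up the Tate uniformization: for $t \neq 0$ small, $E_t(\overline{\mathbf{Q}_p}) \cong \overline{\mathbf{Q}_p}^\times / q(t)^{\mathbf{Z}}$ with $q(t) \in \mathbf{Z}_p[[t]]$, $q(t) = -\frac{p}{(1-p)^2}t + O(t^2)$, compatible with the integral model; one checks (this is the computational heart) that under this uniformization the section $s$ corresponds to a Tate parameter $w(t) \in \mathbf{Q}_p[[t]]^\times$ (or a Laurent series) with $w(0) = $ some explicit unit $u_0 \in \mathbf{Z}_p^\times$. The coordinates $\frac{p}{(1-p)^2}+1/12$ and $\frac{p(1+p)}{2(1-p)^3}$ are rigged precisely so that $w(0)$ specializes to a well-defined nonzero element — in fact, I expect the design is that $w(0)$ is a root of unity times something, arranged so that at $t=0$ the point is a smooth point of $E_0$ lying in $\mathbf{Q}_p^\times$ under the nodal parametrization. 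Under the isomorphism $E_t(\mathbf{Q}_p) \cong \mathbf{Q}_p^\times/q(t)^{\mathbf{Z}}$, a point is torsion of exact order $n$ iff $w(t)^n \in q(t)^{\mathbf{Z}}$ with $n$ minimal, i.e.\ $w(t)^n = q(t)^k$ for some $k$ coprime-ish to $n$.

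Then the construction of the sequence $(t_n)$ is via Hensel/Newton: fix a large integer $n$ (larger than some $n_0$), and solve the equation $w(t)^n = q(t)$ for $t$. Writing both sides as power series in $t$ — $w(t)^n = w(0)^n(1 + O(t))$ and $q(t) = -\frac{p}{(1-p)^2}t(1+O(t))$ — this is $w(0)^n + O(t) = -\frac{p}{(1-p)^2} t + O(t^2)$, so there is no solution near $t=0$ unless $w(0)^n$ is small; the fix is instead to solve $w(t)^n = q(t)^{k}$ with $k \approx n$, or better, to use that $w(0) \in \mathbf{Z}_p^\times$ and solve $w(t)^n \cdot q(t)^{-1} = 1$ treating $\mathrm{ord}_p$: taking $\mathrm{ord}_p$ of the desired relation forces $\mathrm{ord}_p(q(t)^k) = 0$, impossible, so exact torsion of order $n$ in $\mathbf{Q}_p^\times/q^{\mathbf{Z}}$ must instead come from $w(t)^n \in q^{\mathbf{Z}}$ meaning $w(t)$ itself has the form $\zeta q(t)^{j/n}$ — which can't literally happen in $\mathbf{Q}_p^\times$, so torsion points are actually detected after base change, and one picks $t_n \in S(\mathbf{Q}_p)$ with $q(t_n)$ an exact $n$-th power in $\mathbf{Q}_p^\times$ up to the contribution of $w$. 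Concretely: choose $t_n$ so that $w(t_n)$ is a primitive $n$-th root of unity times a power of $q(t_n)$; since $w(0)$ is a unit and, as $t$ ranges over $p^2\mathbf{Z}_p$, $w(t)$ ranges over a neighborhood of $w(0)$ in $\mathbf{Z}_p^\times$ while one has the freedom in $q$, a Hensel-type argument produces, for each $n > n_0$, a $t_n \in p^2\mathbf{Z}_p$ with $t_n \to 0$ and $w(t_n)^n = q(t_n)^{c_n}$ with $\gcd(c_n,n)=1$, hence $s_{t_n}$ torsion of exact order exactly $n$.

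The main obstacle — and the part that needs genuine care rather than hand-waving — is the explicit Tate parametrization of $s$: showing that $w(t) = w(0) + O(t)$ with $w(0)$ a unit of the right arithmetic nature, and more importantly arranging the exact-order claim. The naive count gives torsion of order dividing $n$; to get \emph{exact} order $n$ one must rule out that $w(t_n)$ lands in $q(t_n)^{\mathbf{Z}}$ at an earlier stage, which amounts to showing $w(t_n)$ is not an $m$-th power in $\mathbf{Q}_p^\times/q^{\mathbf{Z}}$ for proper divisors $m \mid n$ — this can be guaranteed by choosing $t_n$ along an arithmetic progression (e.g.\ insisting $w(t_n)$ be a primitive $n$-th root of unity in $\mathbf{Q}_p$ whenever $n \mid p-1$, and otherwise working over the relevant unramified or tamely ramified extension and then descending, or simply choosing the $c_n$ appearing above coprime to $n$). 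I expect the cleanest route is: parametrize solutions of $w(t)^n = q(t)$ (which lie in a finite extension), observe these cluster at $t=0$ by the ultrametric inverse function theorem since $\mathrm{ord}_p q(t) \to \infty$, and then for infinitely many $n$ pull back a $\mathbf{Q}_p$-rational such $t_n$; the exactness then follows because $\mathrm{ord}_p(q(t_n)) $ is, by construction, exactly divisible by a quantity forcing order $n$. The rest is the routine power-series bookkeeping that the statement invites us to skip.
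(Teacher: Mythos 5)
Your framework --- Tate uniformization, lift the section to a Tate parameter $w(t)$, solve $w(t)^n = q(t)$ --- is exactly the paper's, but you commit a decisive error at the very point you flag as the ``computational heart.'' You assume $w(0)$ is a unit in $\Z_p^\times$ (``some explicit unit $u_0$,'' later ``a root of unity times something''). It is not. Under the nodal parametrization $X(0,z) = \frac{z}{(1-z)^2} + 1/12$, $Y(0,z) = \frac{z(1+z)}{2(1-z)^3}$, the chosen coordinates of the section are precisely $X(0,p)$ and $Y(0,p)$; so the section is rigged so that $w(0) = p$, a \emph{uniformizer}. You even correctly observe that if $w(0)$ were a unit, $w(t)^n = q(t)$ would have no solution near $t=0$ since $w(0)^n$ would not be small. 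That observation is the right instinct; the correct inference is that $w(0)$ must not be a unit, and the design of the example ensures this. Instead you pile on speculative repairs (solving $w^n = q^k$ with $k\approx n$, primitive roots of unity, base change to extensions, $c_n$ coprime to $n$) which do not cohere and would not produce the required $\mathbf{Q}_p$-rational sequence $t_n$ for all large $n$.

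With $w(0) = p$ the rest is straightforward. The equation $w(t)^n = q(t)$ reads $p^n + O(p^{n-1}t) = -\frac{p}{(1-p)^2}t + O(t^2)$, and successive approximation (Hensel) gives a unique solution $t_n$ with $\mathrm{ord}_p(t_n)$ growing roughly like $n$; in particular $t_n \to 0$. Exactness of the order needs no coprimality or root-of-unity bookkeeping: $\mathrm{ord}_p w(t_n) = 1$ and $\mathrm{ord}_p q(t_n) = n$, so a relation $w(t_n)^m \in q(t_n)^{\Z}$ with $0 < m < n$ would give, on taking $\mathrm{ord}_p$, $m = kn$ for some integer $k$, impossible. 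Thus the order is exactly $n$, over $\mathbf{Q}_p$, with no base change. What genuinely remains to be checked --- and what you declared you would skip --- is the rigid-analytic identification of $E$ with $\phi^* T$ over the whole of $S$ (matching $j$-invariants is not enough; one needs an isomorphism over the Tate ring $R$, which is the content of Lemma \ref{ec_isom}) and the explicit domain on which $X(q,\cdot)$ is invertible near $z = p$ (Lemmas \ref{implicit1}--\ref{implicit2}). The paper's effort is concentrated precisely there, not on the part your sketch got wrong.
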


We argue using the Tate uniformization.  Recall that the Tate uniformization is defined as follows.  We will work over the base $p \B_1$.  For every $q \in p \B_1(\mathbf{Q}_p) = p \Z_p$, let $\eta_q$ be the map from $\G_m$ to $\P_2$ given in affine coordinates by
$$X(q, z) = \sum_{n \in \Z} \frac{q^n z}{(1 - q^n z)^2} - 2 \sum_{n \geq 1} \frac{n q^n}{1 - q^n} + 1/12 $$
$$Y(q, z) = \sum_{n \in \Z} \frac{q^n z (1 + q^n z)} {2 (1 - q^n z)^3}.$$
(This differs from the formula in \cite[V.3]{Sil2} by the affine-linear transformation $(X, Y) \mapsto (X + 1/12, Y + X/2)$. We have eliminated the $xy$ and $x^2$ terms in the Weierstrass equation at the cost of inverting 2 and 3.)  

For $q \neq 0$, the image of $\eta_q$ is the Tate elliptic curve
$$T_q: y^2 = x^3 + a_4(q)x + a_6(q),$$
and $\eta_q$ induces an isomorphism (in the rigid-analytic sense) between $\mathbf{G}_m / \langle q \rangle$ and $T_q$.  Here $a_4$ and $a_6$ are $p$-adic analytic functions belonging to $\Z_p[[q]]$, with leading terms
$$a_4(q) = -1/48 + O(q)$$
$$a_6(q) = 1/864 + O(q).$$
At $q = 0$, the map $\eta_0$ is given by
$$X(0, z) = \frac{z}{(1-z)^2} + 1/12$$
$$Y(0, z) = \frac{z (1+z)}{2 (1-z)^3},$$ 
and the image of $\eta_0$ is the nodal cubic
$$T_0: y^2 = (x - 1/12)^2 (x + 1/6).$$

The map $\eta_0$ induces an isomorphism between $\mathbf{G}_m$ and the smooth points of $T_0$.  For $q \neq 0$, the uniformization map $\eta_q$ is a group homomorphism from $\mathbf{G}_m$ to $T_q$, with kernel generated by $q$.  In particular, a point $z \in \mathbf{G}_m$ is torsion in the Tate curve if and only if it is of the form $\zeta q^r$, with $\zeta$ a root of unity and $r \in \mathbf{Q}$.
We may regard $\eta$ as an analytic map from $\mathbf{G}_m$ to the Tate curve $T$ over the base $p\B_1$ 

We will use the following explicit form of the Implicit Function Theorem to give a precise disk on which $\eta$ is invertible.

\begin{lem}
\label{implicit1}
Suppose $f \in \Z_p[[ x_1, x_2 ]]$ is such that $f(0, 0) = 0$, and the $x_2$-coefficient of $f$ is a unit in $\Z_p$.
Then we can invert $f$ in the second variable: there is a power series $g\in \Z_p[[ f, x_1 ]]$,
such that $g( f(x_1, x_2), x_1 ) = x_2$ identically as elements of $\Z_p[[x_1, x_2]]$.
In particular, the power series giving $g$ converges for $(f, x_1)$ in any ball of radius less than 1.
\end{lem}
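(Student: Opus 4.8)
The plan is to prove Lemma \ref{implicit1} by the standard iterative (Newton-style or fixed-point) construction of the implicit function, carried out entirely inside the ring of formal power series and then tracking convergence. Write $f(x_1,x_2) = u x_2 + (\text{terms of higher total degree and terms with no }x_2)$, where $u \in \Z_p^\times$ is the $x_2$-coefficient; after dividing by $u$ (which is harmless, since $u$ is a unit) we may assume the $x_2$-coefficient is $1$. Introduce a new variable $y$ standing for the value of $f$, and seek $g(y,x_1) \in \Z_p[[y,x_1]]$ with $g(f(x_1,x_2),x_1) = x_2$. Equivalently, writing $f = x_2 + h(x_1,x_2)$ with $h$ having no linear-in-$x_2$, degree-$1$ part, one solves $x_2 = y - h(x_1,x_2)$ and substitutes recursively, i.e.\ defines $g_0 = y$, $g_{k+1}(y,x_1) = y - h(x_1,g_k(y,x_1))$.

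The key steps, in order: first, show that the iteration $g_{k+1} = y - h(x_1, g_k)$ stabilizes modulo each power of the maximal ideal $(y,x_1)$, so that the limit $g = \lim_k g_k$ exists as a formal power series in $\Z_p[[y,x_1]]$ and satisfies $g = y - h(x_1,g)$, equivalently $f(x_1,g(f,x_1)) = x_2$ after back-substitution; this is the usual Hensel/formal implicit function argument and uses only that $h$ has no constant and no $x_2$-linear term, so each substitution strictly raises the order of the discrepancy. Second — and this is the part that needs the integrality hypothesis $f \in \Z_p[[x_1,x_2]]$ rather than just $f \in \Q_p[[x_1,x_2]]$ — check that all the coefficients of $g$ lie in $\Z_p$: since $u \in \Z_p^\times$ its inverse is in $\Z_p$, and the recursion $g_{k+1} = y - h(x_1,g_k)$ involves only addition, multiplication, and substitution of power series with $\Z_p$-coefficients, so by induction each $g_k \in \Z_p[[y,x_1]]$ and hence $g \in \Z_p[[y,x_1]]$. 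Third, deduce the analytic convergence statement: a power series over $\Z_p$ has all coefficients of absolute value $\le 1$, hence its general term $a_I z^I$ tends to $0$ whenever $|z_i| < 1$ for all $i$, so $g$ converges on every polydisk of polyradius $< 1$; this gives the final sentence of the lemma.

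The main obstacle — really the only subtle point — is bookkeeping the two roles played by the hypothesis: the unit hypothesis on the $x_2$-coefficient is what makes the formal inversion possible at all (it guarantees the Jacobian is invertible, so the fixed-point recursion contracts order-by-order), while the stronger integrality hypothesis $f \in \Z_p[[x_1,x_2]]$ is what lets one conclude $g \in \Z_p[[y,x_1]]$ and therefore gets the clean ``converges on any ball of radius less than $1$'' conclusion without estimating denominators. One should be slightly careful that dividing by the unit $u$ and the change of variable $x_2 \mapsto x_2$ are compatible with staying in $\Z_p[[\cdot]]$ — they are, precisely because $u^{-1}\in\Z_p$ — and that the substitution $g(f(x_1,x_2),x_1)$ is a legitimate operation in $\Z_p[[x_1,x_2]]$, which holds because $f$ has no constant term so $f$ lies in the maximal ideal and substitution into a power series is well-defined. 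Beyond that, the argument is the textbook formal implicit function theorem with an added ``coefficients stay in $\Z_p$'' induction, so no genuinely hard step remains.
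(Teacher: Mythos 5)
Your proof is correct and takes essentially the same route as the paper, which compresses the argument to ``one can compute $g = f^{-1}$ (inverse by composition) by successive approximation'': your iteration $g_{k+1} = y - h(x_1, g_k)$, the $\mathfrak m$-adic convergence argument, and the induction keeping all coefficients in $\Z_p$ is exactly the intended expansion, and the final deduction of convergence on polyradius $<1$ from $\Z_p$-integrality of the coefficients is right. One small wrinkle worth tidying: your fixed-point equation $g = y - h(x_1, g)$ yields $f(x_1, g(y,x_1)) = y$ (a right inverse), not directly $g(f(x_1,x_2),x_1) = x_2$; the line ``equivalently $f(x_1,g(f,x_1)) = x_2$ after back-substitution'' garbles this, and you should instead note that the substitution $x_2 \mapsto f(x_1,x_2)$ is a formal automorphism of $\Z_p[[x_1,x_2]]$ over $\Z_p[[x_1]]$ (its linear part in $x_2$ is a unit), so the one-sided inverse you constructed is automatically two-sided, giving the stated $g(f(x_1,x_2),x_1) = x_2$.
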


\begin{proof}
One can compute $g=f^{-1} $ (inverse by composition) by successive approximation.
\end{proof}

\begin{lem}
\label{implicit2}
Suppose $f \in \Z_p[[x_1, x_2]]$ is such that $f(a_1, a_2) = 0$, with $a_1, a_2 \in p\Z_p$, and
let $r = \frac{\partial f}{\partial x_2}(a_1, a_2)$.
Suppose $r \neq 0$.
Then we can invert $f$ in the second variable in a neighborhood of $(a_1, a_2)$,
and the inverse $g$  satisfies
\[   g \in r \Z_p [[ \frac{ f }{ r^2 }, \frac{x_1 - a_1}{r^2}.  ]] \]
\end{lem}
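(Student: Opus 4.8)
The plan is to deduce Lemma \ref{implicit2} from Lemma \ref{implicit1} by first recentering the problem at $(a_1,a_2)$ and then applying an anisotropic rescaling of the two variables by suitable powers of $r$. First I would recenter: set $X_1 = x_1 - a_1$, $X_2 = x_2 - a_2$ and $\tilde f(X_1,X_2) := f(a_1 + X_1,\, a_2 + X_2)$. Since $a_1,a_2 \in p\Z_p \subseteq \Z_p$, we still have $\tilde f \in \Z_p[[X_1,X_2]]$, with $\tilde f(0,0) = 0$, with $X_1$-coefficient $\frac{\partial f}{\partial x_1}(a_1,a_2) \in \Z_p$, and with $X_2$-coefficient exactly $r$. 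The reason Lemma \ref{implicit1} does not apply directly is precisely that $r$ need not be a unit.

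The key step will be to introduce $X_1 = r^2 Y_1$, $X_2 = r Y_2$ and set $h(Y_1,Y_2) := r^{-2}\,\tilde f(r^2 Y_1,\, r Y_2)$. Writing $\tilde f = \sum_{(i,j)\neq(0,0)} c_{ij} X_1^{i} X_2^{j}$ with all $c_{ij} \in \Z_p$, the coefficient of $Y_1^{i} Y_2^{j}$ in $h$ is $c_{ij}\, r^{2i+j-2}$. These all lie in $\Z_p$: the exponent $2i + j - 2$ is nonnegative except when $(i,j) = (0,1)$, and for that single term the coefficient is $c_{01}\,r^{-1} = r\cdot r^{-1} = 1$ by the very definition of $r$. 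Hence $h \in \Z_p[[Y_1,Y_2]]$, $h(0,0) = 0$, and the $Y_2$-coefficient of $h$ equals $1$, a unit. Lemma \ref{implicit1} then applies to $h$: there is $\tilde g \in \Z_p[[h,Y_1]]$ with $\tilde g(h(Y_1,Y_2),Y_1) = Y_2$, converging whenever $|h| < 1$ and $|Y_1| < 1$.

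It will then remain to unwind the substitutions. Since $h(Y_1,Y_2) = r^{-2} f(a_1 + r^2 Y_1,\, a_2 + r Y_2)$, setting $Y_1 = (x_1 - a_1)/r^2$ and $Y_2 = (x_2 - a_2)/r$ yields $\tilde g\!\left(\frac{f(x_1,x_2)}{r^2},\frac{x_1-a_1}{r^2}\right) = \frac{x_2-a_2}{r}$. Thus $g := r\,\tilde g$, regarded as a power series in the two quantities $\frac{f}{r^2}$ and $\frac{x_1-a_1}{r^2}$, lies in $r\Z_p\big[\big[\frac{f}{r^2},\frac{x_1-a_1}{r^2}\big]\big]$ (indeed with zero constant term, since $\tilde g(0,0) = 0$) and, after substituting its two arguments, returns $x_2 - a_2$ — the analogue of the identity $g(f(x_1,x_2),x_1) = x_2$ of Lemma \ref{implicit1}. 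Translating the convergence statement of Lemma \ref{implicit1}, $g$ converges for $|f| < |r|^{2}$ and $|x_1 - a_1| < |r|^{2}$; and the coefficient bound $c_{ij} r^{2i+j-2} \in \Z_p$ above shows $|f(x_1,x_2)| < |r|^{2}$ as soon as $|x_1 - a_1| < |r|^{2}$ and $|x_2 - a_2| < |r|$, so this is a genuine neighborhood of $(a_1,a_2)$.

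I do not anticipate a real obstacle here; the whole point is to choose the rescaling exponents correctly — a factor $r^{2}$ on $x_1 - a_1$, a factor $r$ on $x_2 - a_2$, and division of $f$ by $r^{2}$ — so that the rescaled series $h$ has $\Z_p$-coefficients together with a unit linear term in $Y_2$. The one delicate point is the coefficient $c_{01} r^{-1}$, which is a priori not integral but is forced to equal $1$; this is exactly what pins down the exponents, and everything after that is bookkeeping, including the routine translation of the radius of convergence.
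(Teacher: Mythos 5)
Your proposal is correct and follows exactly the paper's approach: the paper's proof is the one-liner ``apply Lemma~\ref{implicit1} to $\frac{1}{r^2} f(a_1 + r^2 x_1, a_2 + r x_2)$,'' and you have simply unpacked the verification that this rescaled series has $\Z_p$-coefficients with unit $x_2$-coefficient, plus the bookkeeping for the inverse.
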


\begin{proof}
Apply Lemma \ref{implicit1} to $\frac{1}{r^2} f(a_1 + r^2 x_1, a_2 + r x_2)$; indeed, this lies in $\Z_p[[x_1,x_2]]$ and the $x_2$-coefficient is $1$.
\end{proof}

We want to invert $X(q, z)$ in the second variable in a neighborhood of $(0, p)$.
Writing
$$X(q, z) = \sum_{n \geq 0} \frac{q^n z}{(1 - q^n z)^2} + \sum_{n > 0} \frac{q^n z^{-1}}{(1 - q^n z^{-1})^2} - 2 \sum_{n \geq 1} \frac{n q^n}{1 - q^n} + 1/12, $$
we see that $X$ can be expressed as a Laurent series lying in $\Z_p[[q, z, q z ^{-1}]]$.
Using the identity
\[ \frac{q}{z} = \frac{q}{p} \left( \frac{1}{1 + \frac{z-p}{p}} \right),  \]
we find that
\[ X \in \Z_p[[\frac{q}{p}, \frac{z-p}{p}]].\]
Now we apply Lemma \ref{implicit2} to $X(q, z) - X(0, p)$, with $(x_1, x_2) = (\frac{q}{p}, \frac{z-p}{p})$ and $r = p$, and obtain the following explicit result.

\begin{prop}
There is a power series $\Psi \in p \Z_p [[ \frac{X - X(0, p)}{p^3}, \frac{q}{p^3} ]]$,
such that 
we have $X(q, z) = X$ when $z = \Psi(X, q)$,
provided $X \in (\frac{p}{(1 - p)^2} + 1/12) + p^4 \Z_p$ and $q \in p^4 \Z_p$. 
\end{prop}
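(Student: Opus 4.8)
The plan is to obtain the statement by unwinding the conclusion of Lemma~\ref{implicit2}, applied to the power series just exhibited, and keeping careful track of the powers of $p$.

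Write $a := X(0,p) = \frac{p}{(1-p)^2} + \frac 1{12}$ and pass to the variables $x_1 = q/p$, $x_2 = (z-p)/p$, in which --- by the computation above --- $X(q,z) - a$ is given by a power series $f \in \Z_p[[x_1,x_2]]$ vanishing at the origin. First I would compute $r := (\partial f/\partial x_2)(0,0)$: since $\partial/\partial z = p^{-1}\,\partial/\partial x_2$, the chain rule gives $r = p\cdot(\partial X/\partial z)(0,p) = p\cdot\frac{1+p}{(1-p)^3}$, and as $\frac{1+p}{(1-p)^3}$ is a unit of $\Z_p$ we get $r \neq 0$ and $r = p\cdot(\text{unit})$, whence $r\Z_p = p\Z_p$ and $r^2\Z_p = p^2\Z_p$. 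Applying Lemma~\ref{implicit2} with $a_1 = a_2 = 0$ then produces a power series $g$ with $g(f(x_1,x_2),x_1) = x_2$ identically, whose coefficients, when $g$ is expanded in the formal variables $f/r^2$ and $x_1/r^2$, lie in $r\Z_p$; since $r$ and $p$ differ by a unit, this just says $g \in p\Z_p[[\,f/p^2,\ x_1/p^2\,]]$.

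The rest is bookkeeping. Setting $z = \Psi(X,q) := p + p\,g(X-a,\, q/p)$, the identity $g(f,x_1)=x_2$ (more precisely, its two-sided companion $f(x_1,g(y,x_1))=y$) becomes the asserted relation $X(q,\Psi(X,q)) = X$. To check $\Psi \in p\Z_p[[\tfrac{X-a}{p^3},\tfrac{q}{p^3}]]$, substitute $x_1/p^2 = \tfrac{q}{p^3}$ and rewrite $(f/p^2)^i = p^i(\tfrac{X-a}{p^3})^i$: since every coefficient of $g$ lies in $p\Z_p$, the coefficient of $(\tfrac{X-a}{p^3})^i(\tfrac{q}{p^3})^j$ in $p\,g(X-a,q/p)$ is divisible by $p^{\,i+2}$, hence by $p$, and the constant term of $\Psi$ lies in $p\Z_p$ as well. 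Finally, the hypotheses $X - a\in p^4\Z_p$ and $q\in p^4\Z_p$ place $\tfrac{X-a}{p^3}$ and $\tfrac{q}{p^3}$ in $p\Z_p$, i.e.\ in a ball of radius $<1$, which by Lemma~\ref{implicit1} lies inside the region of convergence, so the formal identity is also an identity of rigid-analytic functions on the stated polydisc.

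I do not expect a serious obstacle: the argument is essentially a matter of chasing scaling factors. The one point deserving real attention is verifying that $(\partial X/\partial z)(0,p)$ is a unit of $\Z_p$ --- this is what makes $r$ equal to $p$ up to a unit and thereby forces the precise exponents $p^3$ in the statement --- and, more pedantically, confirming that the purely formal inversion of Lemma~\ref{implicit1} may be legitimately evaluated at points of the given polydisc, i.e.\ that both $g$ and $f$ converge there and the compositional inverse is two-sided.
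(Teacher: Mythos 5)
Your proof is correct and follows essentially the same route the paper takes (the paper simply applies Lemma~\ref{implicit2} with $(x_1,x_2)=(q/p,(z-p)/p)$ and $r=p$ and states the result without further elaboration, whereas you carry out the bookkeeping explicitly). Your verification that $r=p\cdot\frac{1+p}{(1-p)^3}=p\cdot(\text{unit})$, and the resulting scaling $c_{ij}p^{1+i}\in p\Z_p$ for the coefficient of $(\tfrac{X-a}{p^3})^i(\tfrac q{p^3})^j$, are exactly the details the paper's terse ``obtain the following explicit result'' is suppressing.
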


Next, we want to show that our family $E$ is isomorphic (in the analytic sense) to the Tate family $T$.  Over $\mathbf{Q}_p$, an elliptic curve is isomorphic to some $T_q$ (with $q$ nonzero) if and only if it has split multiplicative reduction.  (In particular, this only happens when the $j$-invariant is non-integral.)  Every $E_t$ has split multiplicative reduction.  The difficulty is to show that the two families are isomorphic as families, and in particular that the isomorphism extends across the singular fiber $E_0 \cong T_0$.

The Tate curve has $j$-invariant
$$j(T_q) = 1/q + 744 + O(q)$$
 given by a Laurent series in $q$ with leading term $q^{-1}$ and coefficients in $\Z_p$.  Thus
 $$j_T \colon q \mapsto j(T_q)^{-1}$$
is a $p$-adic analytic bijection from the disk $B_p$ to itself.  Similarly, our curve $E_t$ has
$$1 / j(E_t) = - \frac{p}{(1-p)^2} t + O(t^2),$$
so again
$$j_E \colon t \mapsto j(E_t)^{-1}$$
is an analytic map from $S$ to $B_p$.  Combining these maps, we obtain a bijective map
$$\phi = j_T^{-1} \circ j_E: S \rightarrow B_p$$
$$\phi(t) = - \frac{p}{(1-p)^2} t + O(t^2),$$
again given by a power series with coefficients in $\Z_p$, such that $E_t$ and $T_{\phi(t)}$ have the same $j$-invariant.

Next we will show that $E$ is isomorphic to the pullback $\phi^* T$.

\begin{lem}
\label{ec_isom}
Let $R$ be an integral domain, and suppose $A_1, A_2, B_1, B_2  \in R$ are such that $A_2 = \alpha A_1$ and $B_2 = \beta B_1$, for units $\alpha, \beta \in R^*$.  Suppose further that $\alpha$ and $\beta$ satisfy the following.
\begin{enumerate}
\item $\alpha^3 = \beta^2$
\item $\beta / \alpha$ is a square in $R$.
\end{enumerate}
Then the two curves $C_1: y^2 = x^3 + A_1 x + B_1$ and $C_2: y^2 = x^3 + A_2 x + B_2$ are isomorphic over $R$.
\end{lem}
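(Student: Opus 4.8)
The plan is to produce the isomorphism by hand, as a coordinate scaling $(x,y)\mapsto(\lambda x,\mu y)$. First I would use hypothesis (2) to choose $v\in R$ with $v^{2}=\beta/\alpha$; since $\alpha,\beta\in R^{*}$ the element $\beta/\alpha$ is a unit, and in an integral domain a square root of a unit is again a unit, so in fact $v\in R^{*}$. Put $\lambda\seteq v^{2}$ and $\mu\seteq v^{3}$, both units.

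The heart of the argument is three scalar identities: $\mu^{2}=\lambda^{3}$, $\mu^{2}=\alpha\lambda$, and $\mu^{2}=\beta$. The first holds since $\mu^{2}=v^{6}=(v^{2})^{3}=\lambda^{3}$. For the third, $\mu^{2}=v^{6}=(v^{2})^{3}=(\beta/\alpha)^{3}=\beta^{3}/\alpha^{3}=\beta$, where the last step is hypothesis (1) in the form $\alpha^{3}=\beta^{2}$. The second then follows because $\alpha\lambda=\alpha v^{2}=\alpha\cdot(\beta/\alpha)=\beta=\mu^{2}$. Granting these, I substitute $(x,y)\mapsto(\lambda x,\mu y)$ into the defining equation of $C_{2}$ and use $y^{2}=x^{3}+A_{1}x+B_{1}$ on $C_{1}$: comparing coefficients, the $x^{3}$ terms match by $\mu^{2}=\lambda^{3}$, the $x$ terms by $\mu^{2}=\alpha\lambda$ (recall $A_{2}=\alpha A_{1}$), and the constants by $\mu^{2}=\beta$ (recall $B_{2}=\beta B_{1}$). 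This is an identity of polynomials in $x$, valid regardless of whether $A_{1}$ or $B_{1}$ happens to vanish, which is precisely why conditions (1)--(2) on $\alpha,\beta$ alone suffice, with no extra hypothesis on the $A_{i},B_{i}$. Hence $(x,y)\mapsto(\lambda x,\mu y)$ carries $C_{1}$ into $C_{2}$.

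To finish, I homogenize: the same formula $(X:Y:Z)\mapsto(\lambda X:\mu Y:Z)$ is a linear automorphism of $\mathbf{P}^{2}_{R}$ (as $\lambda,\mu\in R^{*}$) sending the point at infinity of $C_{1}$ to that of $C_{2}$, so it restricts to a morphism $C_{1}\to C_{2}$ of $R$-schemes; its inverse $(X:Y:Z)\mapsto(\lambda^{-1}X:\mu^{-1}Y:Z)$ is again defined over $R$, giving an isomorphism over $R$, as claimed. There is no serious obstacle here; the only points requiring care are that $v$ itself (not merely $v^{2}$) is a unit, and that one should carry out the check at the level of defining equations rather than quote a classification of short Weierstrass isomorphisms, which would invoke $2,3\in R^{*}$ — hypotheses not assumed in the lemma.
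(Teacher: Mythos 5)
Your proposal is correct and follows essentially the same route as the paper: choose a unit $v$ with $v^{2}=\beta/\alpha$ and apply the coordinate scaling $(x,y)\mapsto(v^{2}x,v^{3}y)$, which works because hypothesis (1) gives $v^{4}=\alpha$ and $v^{6}=\beta$. You are slightly more careful than the paper in verifying the substitution directly as a polynomial identity rather than invoking the ``necessary and sufficient'' criterion for short-Weierstrass isomorphisms (which, as you observe, would tacitly require $2,3\in R^{*}$), but this is a refinement, not a different argument.
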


\begin{proof}

In order that the curves be isomorphic over $R$, it is necessary and sufficient that there exist $\lambda \in R^*$ such that
$$A_2 = \lambda^4 A_1$$
and
$$B_2 = \lambda^6 B_1.$$
In this situation, the isomorphism is given by $(x, y) \mapsto (\lambda^2 x, \lambda^3 y)$.

Taking $\lambda$ such that $\lambda^2 = \beta / \alpha$ proves the Lemma.
\end{proof}

Now take 
$$R = \left \{ \sum_{n=0}^{\infty} a_n t^n | a_n \in p^{-2n} \Z_p, \text{ and } \lim_{n \rightarrow \infty} p^{2n} a_n = 0 \right \}.$$
This is the ring of convergent power series on $S$ with Gauss norm (on $S$) at most 1.  Thanks to our choice of $\phi$, the pullback of the Tate family $\phi^* T$ and our family $E$ have the same $j$-invariant.  We apply Lemma \ref{ec_isom} to $C_1 = \phi^* T$ and $C_2 = E$.

All four of $A_1 = a_4(\phi(t))$, $B_1 = a_6(\phi(t))$, $A_2  = -1/48 + t$ and $B_2 = 1/864 -  t \left ( \frac{p}{(1-p)^2} - 1/12  \right )$ are units in $R$, and we define
$$\alpha = A_2 / A_1 = 1 + O(t)$$
$$\beta = B_2 / B_1 = 1 + O(t).$$
The equality of the $j$-invariants gives $\alpha^3 = \beta^2$.  Finally, since $\beta / \alpha$ is a power series in $R$ with leading coefficient $1$, its square root also lies in $R$, by the binomial theorem.

Hence, we may identify $E$ and $\phi^* T$.

The uniformization of the Tate curve gives a surjective map
$$\mathbf{G}_m \times S \rightarrow E$$
over $S$; for $t \in S$ with $t \neq 0$, the kernel of $\mathbf{G}_m \times \{ t \}  \rightarrow  E_t$ is generated by $\phi(t) \in \mathbf{G}_m$.
By the $p$-adic version of the Inverse Function Theorem, the map $\mathbf{G}_m \times S \rightarrow E$ is locally invertible.  Our section
$$s: S \rightarrow E,$$
defined by
$$s(t) = \left (\frac{p}{(1-p)^2} + 1/12, \frac{p(1+p)}{2(1-p)^3} \right )$$
for all $t$,
can be lifted to a map
$$\hat{s}: S \rightarrow \mathbf{G}_m$$
such that $\hat{s}(0) = p$.  
Explicitly, we have
$$\hat{s}(t) = X^{-1} \left (\lambda(t)^2 \left ( \frac{p}{(1-p)^2} + 1/12 \right ), \phi(t) \right). $$
Here $\lambda(t)$ is as in Lemma \ref{ec_isom}; it is a power series in $t$, with leading coefficient $1$ and all coefficients in $\Z_p$;
and $X^{-1}$ is the power series given in Lemma \ref{implicit2}, written abusively as a function of $X$ and $q$.
This power series converges for $t \in B_{p^4} = p^4 \Z_p$, and has the form
$$\hat{s}(t) = p + O(t).$$
For $n \geq 4$ the equation $\phi(t) = \hat{s}(t)^n$ has a unique solution, which can be found by successive approximation, and satisfies $t \in p^n + p^{n+1} \Z_p$.  When $\phi(t) = \hat{s}(t)^n$, the section $s$ is torsion of exact order $n$.  This completes the proof.

\bibliography{padic_bib}
\bibliographystyle{plain}

\end{document}